\definecolor{mylinkcolor}{rgb}{0.5,0.0,0.0}
\definecolor{myurlcolor}{rgb}{0.0,0.0,0.75}
\DeclareMathAlphabet{\mathfr}{U}{euf}{m}{n}
\newtheorem{theorem}{Theorem}
\newtheorem{proposition}[theorem]{Proposition}
\newtheorem{corollary}[theorem]{Corollary}
\newtheorem{lemma}[theorem]{Lemma}
\theoremstyle{remark}
\newtheorem{remark}[theorem]{Remark}
\newcommand{\Q}{\mathbb Q}
\newcommand{\Qbar}{{\overline{\mathbb Q}}}
\newcommand{\R}{\mathbb R}
\newcommand{\Z}{\mathbb Z}
\providecommand{\C}{\mathbb C}
\renewcommand{\C}{\mathbb C}
\newcommand{\HH}{\mathbb H}
\newcommand{\GL}{\mathrm{GL}}
\newcommand{\M}{\mathrm{M }}
\newcommand{\End}{\operatorname{End}}
\newcommand{\USp}{\operatorname{USp}}
\newcommand{\Aut}{\operatorname{Aut}}
\newcommand{\GSp}{\operatorname{GSp}}
\newcommand{\Tr}{\operatorname{Tr}}
\newcommand{\ST}{\mathrm{ST}}
\newcommand{\AST}{\mathrm{AST}}
\newcommand{\rk}{\mathrm{rk}}
\newcommand{\NS}{\mathrm{NS}}
\newcommand{\et}{\acute{\mathrm{e}}\mathrm{t}}
\newcommand{\I}{\mathrm{(I)}}
\newcommand{\II}{\mathrm{(II)}}
\newcommand{\III}{\mathrm{(III)}}
\newcommand{\IV}{\mathrm{(IV)}}
\newcommand{\mat}{\mathsf{M}}
\newcommand{\newabstract}[1]{%
  \par\bigskip
  \csname otherlanguage*\endcsname{#1}%
  \csname captions#1\endcsname
  \item[\hskip\labelsep\scshape\abstractname.]
  }
\begin{document}
\title[Arithmetic invariants from Sato--Tate moments]{Arithmetic invariants from Sato--Tate moments}

\author{Edgar Costa}
\address{Department of Mathematics,
Massachusetts Institute of Technology,
77 Massachusetts Ave., Cambridge, MA 02139, United States}
\email{edgarc@mit.edu}
\urladdr{\url{https://edgarcosta.org}}

\author{Francesc Fit\'e}
\address{Department of Mathematics,
Massachusetts Institute of Technology,
77 Massachusetts Ave., Cambridge, MA 02139, United States}
\email{ffite@mit.edu}
\urladdr{\url{https://math.mit.edu/~ffite/}}

\author{Andrew V. Sutherland}
\address{Department of Mathematics,
Massachusetts Institute of Technology,
77 Massachusetts Ave., Cambridge, MA 02139, United States}
\email{drew@math.mit.edu}
\urladdr{\url{https://math.mit.edu/~drew/}}
\date{\today}

\begin{abstract}
  We give some arithmetic-geometric interpretations of the moments $\M_2[a_1]$, $\M_1[a_2]$, and $\M_1[s_2]$ of the Sato--Tate group of an abelian variety $A$ defined over a number field by relating them to the ranks of the endomorphism ring and N\'eron--Severi group of $A$.

\end{abstract}
\maketitle

Let $A$ be an abelian variety of dimension $g\geq 1$ defined over a number field $k$. For a rational prime $\ell$, let
$$
\rho_{A,\ell}\colon G_k\rightarrow \Aut(V_\ell(A))
$$
denote the $\ell$-adic representation attached to $A$ given by the action of the absolute Galois group of $G_k$ on the rational Tate module of $A$. Let $G_\ell$ denote the Zariski closure of the image of $\rho_{\ell,A}$, viewed as a subgroup scheme of $\GSp_{2g}$, let~$G_\ell^1$ denote the kernel of the restriction to $G_\ell$ of the similitude character, and fix an embedding $\iota$ of $\Q_\ell$ into $\C$. The \emph{Sato--Tate group} $\ST(A)$ of $A$ is a maximal compact subgroup of the $\C$-points of the base change $G_\ell^1\times _{\Q_\ell,\iota}\C$ (see \cite[\S2]{FKRS12} and \cite[Chap.~8]{Ser12}).

Throughout this note we shall assume that 
the algebraic Sato--Tate conjecture of Banaszak and Kedlaya~\cite[Conjecture 2.3]{BK16a} holds for $A$. This conjecture is known, for example, when $g\leq 3$ (see \cite[Thm. 6.10]{BK16a}), or more generally, whenever the Mumford--Tate conjecture holds for $A$ (see \cite{CC}). It predicts the existence of an algebraic reductive group $\AST(A)$ defined over $\Q$ such that
$$
\AST(A)\times_\Q \Q_\ell \simeq G_\ell^1
$$
for every prime $\ell$.  In this case $\ST(A)$ can be defined as a maximal compact subgroup of the $\C$-points of $\AST(A)\times_\Q\C$, which depends neither on the choice of a prime $\ell$ nor on the choice of an embedding $\iota$.

By construction
$\ST(A)$ comes equipped with a faithful self-dual representation
$$
\rho: \ST(A)\rightarrow \GL(V),
$$
where $V$ is a $\C$ vector space of dimension $2g$. We call $\rho$ the standard representation of $\ST(A)$ and use it to view $\ST(A)$ as a compact real Lie subgroup of $\USp(2g)$.

In this note we are interested in the following three virtual characters of $\ST(A)$:
$$
a_1=\Tr\big( V\big)\,, \qquad a_2=\Tr\big(\wedge^2 V\big)\,,\qquad s_2=a_1^2-2a_2\,.
$$
For a nonnegative integer $j$, define the $j$th moment of a virtual character $\varphi$ as the virtual multiplicity of the trivial representation in $\varphi^{j}$. In particular, we have
\begin{align}\label{equation: intmoments}
\M_2[a_1]&=\dim_{\C}\big(V^{\otimes 2}\big)^{\ST(A)},\\\notag
\M_1[a_2]&=\dim_{\C}\big(\!\wedge^2\! V\big)^{\ST(A)},\\[4pt]\notag
\M_1[s_2]&=\M_2[a_1]-2\M_1[a_2].
\end{align}
Let $\End(A)$ denote the ring of endomorphisms of $A$ (defined over~$k$).


\begin{proposition}\label{proposition: rankends}
We have
$$
\M_2[a_1]=\rk_{\Z}(\End(A))\,.
$$
\end{proposition}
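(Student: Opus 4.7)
The plan is to chain three identifications: $\M_2[a_1]$ equals $\dim_\C\End_{\ST(A)}(V)$ by self-duality of the standard representation; this coincides with $\dim_{\Q_\ell}\End_{G_\ell^1}(V_\ell(A))$ via the algebraic Sato--Tate conjecture; and a comparison of $G_\ell^1$- and $G_\ell$-invariants then lets one invoke Faltings' theorem to obtain $\rk_\Z\End(A)$.

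For the first link, because $\rho$ factors through $\USp(2g)$, the symplectic form on $V$ induces a $\ST(A)$-equivariant isomorphism $V\simeq V^*$, hence $V^{\otimes 2}\simeq V\otimes V^*\simeq\End(V)$; comparing with \eqref{equation: intmoments} gives $\M_2[a_1]=\dim_\C\End_{\ST(A)}(V)$. For the second, $\ST(A)$ is Zariski dense in $\AST(A)(\C)$, so $\ST(A)$-invariants and $\AST(A)$-invariants agree, and the algebraic Sato--Tate isomorphism $\AST(A)\times_\Q\Q_\ell\simeq G_\ell^1$ combined with flat base change yields
$$\dim_\C\End_{\ST(A)}(V)=\dim_{\Q_\ell}\End_{G_\ell^1}(V_\ell(A)).$$
Faltings' isogeny theorem then supplies $\End(A)\otimes_\Z\Q_\ell\simeq\End_{G_k}(V_\ell(A))=\End_{G_\ell}(V_\ell(A))$, the second equality because $G_\ell$ is by construction the Zariski closure of $\rho_{A,\ell}(G_k)$. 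So everything reduces to showing $\End_{G_\ell^1}(V_\ell)=\End_{G_\ell}(V_\ell)$.

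One inclusion is automatic. For the other, I would invoke a theorem of Bogomolov to the effect that $G_\ell$ contains the group $\mathbb{G}_m\cdot I$ of scalar homotheties in $\GL(V_\ell)$, together with surjectivity of the similitude $\lambda\colon G_\ell\to\mathbb{G}_m$ (which follows from the identification of $\lambda\circ\rho_{A,\ell}$ with the cyclotomic character, whose image is open in $\Z_\ell^\times$, hence Zariski dense in $\mathbb{G}_m$). These two facts force $G_\ell=G_\ell^1\cdot(\mathbb{G}_m\cdot I)$, and since homotheties commute with every linear endomorphism, any $\phi\in\End_{G_\ell^1}(V_\ell)$ automatically commutes with all of $G_\ell$. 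This Bogomolov-type input is the only substantive step in the argument; the remainder is formal manipulation of invariants across the analytic, algebraic, and $\ell$-adic incarnations of the Sato--Tate group.
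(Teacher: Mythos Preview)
Your proof is correct and follows essentially the same route as the paper's: Faltings' theorem to identify $\rk_\Z\End(A)$ with $\dim_{\Q_\ell}\End_{G_\ell}(V_\ell(A))$, the observation that homotheties act trivially on $V_\ell\otimes V_\ell^\vee$ to pass from $G_\ell$-invariants to $G_\ell^1$-invariants, and the compact--algebraic comparison to reach $\ST(A)$, together with self-duality of $V$. The paper phrases the compact--algebraic step as ``Weyl's unitarian trick'' rather than Zariski density of a maximal compact, and is terser about the $G_\ell\to G_\ell^1$ step (it simply says that homotheties centralize $V_\ell\otimes V_\ell^\vee$, without explicitly invoking Bogomolov to ensure $G_\ell=G_\ell^1\cdot\mathbb{G}_m$), but the substance is identical.
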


\begin{proof}
By Faltings isogeny theorem~\cite{Fal83}, we have
$$
\rk_{\Z}(\End(A))  =  \dim_{\Q_\ell}(\End(A)\otimes \Q_\ell) = \dim_{\Q_\ell}(\End_{G_\ell}(V_\ell(A)))\,.
$$
Observing that homotheties centralize $V_\ell(A)\otimes V_\ell(A)^{\vee}$ and Weyl's unitarian trick allows us to pass from $G^1_\ell$ to the maximal compact subgroup $\ST(A)$, we obtain
$$
\dim_{\Q_{\ell}}\big((V_\ell(A) \otimes V_\ell(A)^\vee)^{G_\ell}\big)=\dim_{\Q_{\ell}}\big((V_\ell(A) \otimes V_\ell(A)^\vee)^{G_\ell^1}\big)  =  \dim_{\C}\big((V \otimes V^\vee)^{\ST(A)}\big)\,.
$$
The proposition follows from the definition of $\M_2[a_1]$ and the self-duality of $V$.
\end{proof}

Let $\NS(A)$ denote the N\'eron--Severi group of $A$.

\begin{proposition}\label{proposition: rkPic}
We have
$$
\M_1[a_2]=\rk_\Z(\NS(A))\,.
$$
\end{proposition}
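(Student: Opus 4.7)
The plan is to mirror Proposition~\ref{proposition: rankends}, with two changes: replace Faltings' isogeny theorem with the Tate conjecture for divisors on an abelian variety over a number field (also due to Faltings), and carry along a Tate twist $\Q_\ell(1)$ throughout.

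First I would identify $\NS(A)\otimes\Q_\ell$ cohomologically. Either directly via the $\ell$-adic cycle class map and the Tate conjecture for divisors on $A$, or by viewing $\NS(A)$ as the symmetric part of $\Hom(A,A^\vee)$ and combining Faltings' isogeny theorem for $A\times A^\vee$ with the Weil pairing identification $V_\ell(A^\vee)\cong V_\ell(A)^\vee(1)$ (the antisymmetry of which converts symmetric isogenies into alternating bilinear forms). Either route produces an isomorphism
$$
\NS(A)\otimes\Q_\ell \;\xrightarrow{\sim}\; H^2_{\et}(A_{\bar k},\Q_\ell(1))^{G_k}\;=\;\big(\wedge^2 V_\ell(A)^\vee(1)\big)^{G_k},
$$
using $H^1_{\et}(A_{\bar k},\Q_\ell)=V_\ell(A)^\vee$. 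Because $G_\ell$ is the Zariski closure of $\rho_{A,\ell}(G_k)$, the $G_k$-invariants of this algebraic representation coincide with its $G_\ell$-invariants.

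Next I pass from $G_\ell$ to $G_\ell^1$ by the homothety trick used in Proposition~\ref{proposition: rankends}. A scalar matrix $\mu\cdot\mathrm{Id}\in \GSp_{2g}$ has similitude $\mu^2$, hence acts as $\mu^{-2}$ on $\wedge^2 V_\ell(A)^\vee$ and as $\mu^2$ on $\Q_\ell(1)$; the combined action on $\wedge^2 V_\ell(A)^\vee(1)$ is trivial. Since the center of $\GSp_{2g}$ is contained in $G_\ell$ and surjects onto the torus quotient $G_\ell/G_\ell^1$ via the similitude, this forces $(\wedge^2 V_\ell(A)^\vee(1))^{G_\ell}=(\wedge^2 V_\ell(A)^\vee(1))^{G_\ell^1}$. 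On $G_\ell^1$ the similitude is trivial, so the Tate twist reduces to the trivial representation and $(\wedge^2 V_\ell(A)^\vee(1))^{G_\ell^1}=(\wedge^2 V_\ell(A)^\vee)^{G_\ell^1}$. Weyl's unitarian trick and the self-duality of $V$ then yield $\dim_\C\big((\wedge^2 V)^{\ST(A)}\big)=\M_1[a_2]$.

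The step I expect to require the most care is the first one: verifying the cohomological identification cleanly from Faltings' results. Via the $\Hom(A,A^\vee)$ route one must chase the antisymmetry of the Weil pairing to confirm that symmetric isogenies correspond exactly to alternating $G_k$-equivariant forms (i.e.\ land in $\wedge^2$ rather than $\mathrm{Sym}^2$), and via the cohomological route one must invoke the Tate conjecture for divisors on $A$. Once this identification is in hand, the remainder is a twist-tracking variant of Proposition~\ref{proposition: rankends}: the Tate twist and the weight of $\wedge^2 V^\vee$ under homotheties cancel precisely, so the reduction to $\ST(A)$ goes through exactly as before.
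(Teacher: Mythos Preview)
Your proposal is correct and follows essentially the same approach as the paper: identify $\NS(A)\otimes\Q_\ell$ with the Galois-invariant part of $H^2_{\et}(A_{\bar k},\Q_\ell)(1)\simeq \wedge^2 H^1_{\et}(A_{\bar k},\Q_\ell)(1)$ via Faltings, then repeat the passage $G_\ell\to G_\ell^1\to \ST(A)$ from Proposition~\ref{proposition: rankends}. The only cosmetic difference is bookkeeping of the twist---the paper writes $V_\ell(A)\simeq H^1_{\et}(A_{\bar k},\Q_\ell)(1)$ and lands in $(\wedge^2 V_\ell(A))(-1)$, whereas you use $H^1_{\et}=V_\ell(A)^\vee$ and land in $(\wedge^2 V_\ell(A)^\vee)(1)$; self-duality of $V$ makes these interchangeable, and your explicit homothety check (that $\mu^{-2}\cdot\mu^{2}=1$) is exactly the ``homotheties centralize'' remark the paper leaves implicit.
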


\begin{proof}
As explained in~\cite[\S2]{Tat65} (and in~\cite[Eq.\ (9)]{Tat66} using the same argument over finite fields), Faltings isogeny theorem provides an isomorphism
$$
\NS(A)\otimes_\Z \Q_\ell \simeq \big(H^2_{\et}(A_\Qbar,\Q_\ell)(1)\big)^{G_k}\simeq \big(\big(\wedge^2V_\ell(A)\big)(-1)\big)^{G_\ell}\,,
$$
where we have denoted Tate twists in the usual way and we have used the isomorphism $V_\ell(A)\simeq H^1_{\et}(A_\Qbar,\Q_\ell)(1)$. Then, as in the proof of Proposition~\ref{proposition: rankends}, we have
$$
\rk_{\Z}(\NS(A))  = \dim_{\Q_\ell}\big(\big(\wedge^2V_\ell(A)\big)(-1)\big)^{G_\ell^1}
 = \dim_{\C}\big(\wedge^2 V \big)^{\ST(A)}
 = \M_1[a_2],
$$
which completes the proof.
\end{proof}

In order to obtain a description of $\M[s_2]$, we will first relate $\rk_{\Z}(\End(A))$ with $\rk_{\Z}(\NS(A))$. There are three division algebras over $\R$: the quaternions $\HH$, the complex field $\C$, and the real field $\R$ itself. By Wedderburn's theorem we have
\begin{equation}\label{equation: real algebra}
\End(A)\otimes \R \simeq \prod_i \mat_{t_i}(\R)\times \prod_i \mat_{n_i}(\HH)\times \prod_i \mat_{p_i}(\C)\,,
\end{equation}
for some nonnegative integers $t_i$, $n_i$, $p_i$, where $\mat_n$ denotes the $n\times n$ matrix ring.

\begin{lemma}\label{lemma: rksrel}
With the notation of equation~\eqref{equation: real algebra}, we have
$$
\rk_\Z(\End(A))- 2\cdot \rk_\Z(\NS(A))=2\sum_i n_i - \sum_i t_i\,.
$$
In particular, we have the following inequality
\begin{equation}
2\cdot \rk_\Z(\NS(A))-g \leq \rk_\Z(\End(A))\leq 2\cdot \rk_\Z(\NS(A))+g\,.
\end{equation}
\end{lemma}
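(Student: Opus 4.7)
The plan is to compute both ranks explicitly in terms of the integers $t_i,n_i,p_i$ of~\eqref{equation: real algebra}, derive the equality by subtraction, and then deduce the inequality from a symplectic multiplicity argument on the homology of $A$.

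For $\rk_\Z(\End(A))$, equation~\eqref{equation: real algebra} directly gives
$$\rk_\Z(\End(A))=\dim_\R(\End(A)\otimes\R)=\sum_i t_i^2+4\sum_i n_i^2+2\sum_i p_i^2.$$
For $\rk_\Z(\NS(A))$, I would fix a polarization of $A$ defined over $k$ and let $\dagger$ denote the associated Rosati involution on $\End^0(A):=\End(A)\otimes\Q$. The classical identification of $\NS(A)\otimes\Q$ with the $\dagger$-fixed subspace of $\End^0(A)$ gives $\rk_\Z(\NS(A))=\dim_\R(\End(A)\otimes\R)^\dagger$. Positivity forces $\dagger$ to preserve each simple factor of the real decomposition: if it swapped two factors $E_1$ and $E_2$, the element $x=(1_{E_1},0,\dots)$ would satisfy $xx^\dagger=0$, contradicting positivity. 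On each factor $\dagger$ is then a positive involution on a real simple algebra, hence conjugate to transpose on $\mat_{t_i}(\R)$, to the quaternionic conjugate-transpose on $\mat_{n_i}(\HH)$, or to the complex conjugate-transpose on $\mat_{p_i}(\C)$, with fixed subspaces of dimensions $t_i(t_i+1)/2$, $n_i(2n_i-1)$, and $p_i^2$ respectively. Subtracting twice this from $\rk_\Z(\End(A))$ cancels the quadratic terms and leaves
$$\rk_\Z(\End(A))-2\rk_\Z(\NS(A))=2\sum_i n_i-\sum_i t_i,$$
which is the asserted equality.

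For the inequality, I would examine the faithful action of $\End(A)\otimes\R$ on $H_1(A_\C,\R)\simeq\R^{2g}$ (for some complex embedding of $k$), equipped with the symplectic form from the chosen polarization, for which $\dagger$ is the adjoint. The idempotents cutting out the isotypic components are $\dagger$-fixed, so the form restricts non-degenerately to each isotypic component. For a factor $\mat_{n_i}(\HH)$ or $\mat_{p_i}(\C)$, the imaginary part of the standard quaternionic or complex Hermitian form on the minimal faithful representation is a compatible non-degenerate alternating form, so the multiplicity there is at least $1$, contributing at least $4n_i$ or $2p_i$ to the total $2g$. For a factor $\mat_{t_i}(\R)$ acting on $\R^{t_i}\otimes_\R\R^m$, every invariant bilinear form is the tensor product of the (unique up to scalar, symmetric) invariant form on $\R^{t_i}$ with a bilinear form on $\R^m$; for it to be non-degenerate and alternating, the form on $\R^m$ must be alternating and non-degenerate, forcing $m$ to be even, hence $m\geq 2$. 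Summing these contributions gives $\sum_i t_i+2\sum_i n_i+\sum_i p_i\leq g$, and combining with the equality already proved yields
$$-g\leq-\sum_i t_i\leq 2\sum_i n_i-\sum_i t_i\leq 2\sum_i n_i\leq g.$$

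The main obstacle is the multiplicity-$\geq 2$ argument for the $\mat_{t_i}(\R)$ factors, which hinges on identifying the space of invariant bilinear forms on the standard representation and the non-existence of non-degenerate alternating forms in odd dimension; the remainder is routine bookkeeping with Wedderburn and the classification of positive involutions.
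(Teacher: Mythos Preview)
Your argument is correct, and it takes a genuinely different route from the paper's. The paper reduces to the isotypic case $A\sim B^r$, invokes Albert's classification of the rational algebra $\End^0(B)$ into four types, and reads off both $\dim_\R(\End(A)\otimes\R)$ and $\dim_\R(\End(A)\otimes\R)^\dagger$ from Mumford's tables; the inequality then follows from the numerical restrictions $e\le\dim B$ (type~I) and $2e\le\dim B$ (types~II,~III) recorded there. You instead stay with the real Wedderburn decomposition~\eqref{equation: real algebra} as given, use positivity to pin down the involution on each real simple factor (bypassing Albert's rational classification), and for the inequality give a self-contained symplectic multiplicity argument on $H_1(A_\C,\R)$ rather than citing Mumford's bounds. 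Your approach is more elementary and better adapted to the statement since~\eqref{equation: real algebra} is already the real decomposition; the paper's approach has the virtue of tying everything to the standard reference and making the four Albert types explicit, which connects more visibly to the literature. One small remark: your use of ``invariant bilinear form'' in the $\mat_{t_i}(\R)$ step should be read as ``bilinear form for which the $\dagger$-adjoint condition $B(Xv,w)=B(v,X^\dagger w)$ holds''; with that understood, the tensor-form argument forcing even multiplicity is clean.
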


\begin{proof}
Let $\dagger$ denote the Rosati involution of $\End(A)\otimes\R$.  As explained in \cite[p.~190]{Mum70}, we have $\rk_\Z(\NS(A))=\dim_\R((\End(A)\otimes\R)^\dagger)$. For the first part of the lemma, it thus suffices to prove
\begin{equation}\label{equation: E2P}
  \dim_\R(\End(A)\otimes \R)-2\cdot  \dim_{\R}\big((\End(A)\otimes \R)^ \dagger\big)= 2 \sum_i n_i - \sum_i t_i\,.
\end{equation}
We say that an abelian variety defined over $k$ is isotypic if it is isogenous (over $k$) to the power of a simple abelian variety. Since both the left-hand and right-hand sides of \eqref{equation: E2P} are additive in the isotypic components of $A$, we may reduce to the case that $A$ is isotypic. We thus may assume that $A$ is the $r$th power of a simple abelian variety $B$. By Albert's classification of division algebras with a positive involution \cite[Thm. 2, \S21]{Mum70}, there are four possibilities for $\End(A)\otimes_{\Z} \R$, namely
$$
\I\ \mat_r(\R^e)\,,\qquad \II\ \mat_r(\mat_2(\R)^e)\,,\qquad \III\ \mat_r(\HH^e)\,,\qquad \IV\ \mat_r(\mat_d(\C)^ e)\,,
$$
where $e$ and $d$ are nonnegative integers. The action of the Rosati involution $\dagger$ on $\End(A)\otimes_{\Z} \R$ is also described in \cite[Thm.~2, \S21]{Mum70}, and the dimension of its fixed subspace can be easily read from the parameter $\eta$ listed on \cite[Table on p.~202]{Mum70}. The first part of the lemma then follows from the computations listed in Table \ref{table: dims}.

For the second part of the lemma we need to show that
$$
\left|2\sum_i n_i-\sum_i t_i\right|\leq g.
$$
This is immediate from Table \ref{table: dims} once we take into account that 
$e \leq \dim(B)$ for type $\I$, and $2e \leq \dim(B)$ for types $\II$ and $\III$ (see \cite[Table on p.~202]{Mum70}).
\end{proof}

\begin{table}[htb]
\caption{$\R$-algebra dimensions for isotypic $A$ by Albert type.}\label{table: dims}
\begin{tabular}{crrr}
$\mathrm{Type}$ & $\dim_\R(\End(A)\otimes \R)$ & $\dim_{\R}\big((\End(A)\otimes \R)^ \dagger\big)$ &  $2\sum_i n_i-\sum_i t_i$\\\toprule
$\I$ & $e r^2$ & $er(r+1)/2$ & $-er$\\[4pt]
$\II$ & $4er^ 2$ & $e(r+2r^ 2)$ & $-2er$ \\[4pt]
$\III$ & $4er^2$ & $e(-r+2r^ 2)$ & $2er$\\[4pt]
$\IV$ & $2er^ 2d^2$ & $er^ 2d^ 2$ & $0$ \\\bottomrule
\end{tabular}
\end{table}

As an immediate consequence of Proposition \ref{proposition: rankends}, Proposition \ref{proposition: rkPic}, and Lemma \ref{lemma: rksrel}, we obtain the following corollary.
\begin{corollary}
With the notation of equation~\eqref{equation: real algebra}, we have
\begin{equation*}
  \M_1[s_2] = 2 \sum_i n_i - \sum_i t_i\,.
\end{equation*}
\end{corollary}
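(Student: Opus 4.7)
The plan is to simply chain together the three results already established. First I recall the defining relation
\[
\M_1[s_2] = \M_2[a_1] - 2\M_1[a_2],
\]
which comes for free from $s_2 = a_1^2 - 2a_2$ and the definitions in \eqref{equation: intmoments}.

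Next I would substitute the two character-theoretic identifications: Proposition~\ref{proposition: rankends} replaces $\M_2[a_1]$ by $\rk_\Z(\End(A))$, and Proposition~\ref{proposition: rkPic} replaces $\M_1[a_2]$ by $\rk_\Z(\NS(A))$. This yields
\[
\M_1[s_2] = \rk_\Z(\End(A)) - 2\cdot \rk_\Z(\NS(A)).
\]

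Finally, I invoke the first assertion of Lemma~\ref{lemma: rksrel}, which evaluates this difference as $2\sum_i n_i - \sum_i t_i$ in terms of the Albert decomposition \eqref{equation: real algebra}. There is no real obstacle here; the content of the corollary is entirely in the three preceding results, and the proof is a one-line combination.
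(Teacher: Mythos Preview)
Your proposal is correct and matches the paper's own treatment exactly: the corollary is stated there as an immediate consequence of Proposition~\ref{proposition: rankends}, Proposition~\ref{proposition: rkPic}, and Lemma~\ref{lemma: rksrel}, combined via the defining relation $\M_1[s_2]=\M_2[a_1]-2\M_1[a_2]$.
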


\begin{remark}
The moment $\M_1[s_2]$ can also be interpreted as a Frobenius--Schur indicator, which allows us to give an alternative proof of \eqref{equation: E2P}, conditional on the Mumford--Tate conjecture, that does not make use of Albert's classification.
Recall that $\rho:\ST(A)\rightarrow \GL(V)$ denotes the standard representation of $\ST(A)$ and let $\Psi^2(\rho)$ be the central function defined as $\Psi^2(\rho)(g)=\rho(g^2)$ for every $g\in \ST(A)$; note that $s_2$ is simply $\Tr \Psi^2(\rho)$. Thus the moment $\M_1[s_2]$ is the Frobenius--Schur indicator $\mu(\rho)$ of the standard representation $\rho$, which is just the multiplicity of the trivial representation in $\Psi^2(\rho)$.
Inequality \eqref{equation: E2P} simply asserts that the trivial bound $|\mu(\rho)|\leq 2g$ can be improved to the sharper bound $|\mu(\rho)|\leq g$. Recall that the Frobenius--Schur indicator of an irreducible representation can only take the values $1$, $-1$, and $0$ depending on whether the representation is realizable over~$\R$, has real trace but it is not realizable over~$\R$, or has trace taking some value in $\C\setminus \R$, respectively (see \cite[p.~108]{Ser77}). To obtain the sharper bound, it suffices to show that any irreducible constituent $\sigma$ of the standard representation $\rho$ having real trace must have dimension at least $2$. This follows from our assumption that the Mumford--Tate conjecture holds for $A$.
\end{remark}

The results in this note explain, in particular, certain redundancies in Table~8 of \cite{FKRS12} that Seoyoung Kim used to prove Proposition~\ref{proposition: rankends} in the case where~$A$ is an abelian surface \cite[Proof of Thm.~3.4]{K}.

\section*{Acknowledgments.} The main results of this paper were discovered during the \emph{Arithmetic of Curves} workshop held at Baskerville Hall in Hay-on-Wye Wales in August 2018.  We thank the organizers Alexander Betts, Tim and Vladimir Dokchitser, and Celine Maistret for their kind invitation to participate. We also thank Seoyoung Kim for her interest in this note. The authors were financially supported by the Simons Collaboration in Arithmetic Geometry, Number Theory, and Computation via Simons Foundation grant 550033.

\end{document}